\titleformat{\chapter}[display]
{\normalfont\huge\bfseries}{\chaptertitlename\\thechapter}{20pt}{\Huge}
\titleformat{\subsubsection}[runin]
{\normalfont\normalsize\bfseries}{\thesubsubsection}{1em}{}
\titleformat{\paragraph}[runin]
{\normalfont\normalsize\bfseries}{\theparagraph}{1em}{}
\titleformat{\subparagraph}[runin]
{\normalfont\normalsize\bfseries}{\thesubparagraph}{1em}{}
\titlespacing*{\chapter} {0pt}{50pt}{40pt}
\titlespacing*{\section} {0pt}{3.5ex plus 1ex minus .2ex}{2.3ex plus .2ex}
\titlespacing*{\subsection} {0pt}{3.25ex plus 1ex minus .2ex}{1.5ex plus .2ex}
\titlespacing*{\subsubsection}{0pt}{3.25ex plus 1ex minus .2ex}{1.5ex plus .2ex}
\titlespacing*{\paragraph} {0pt}{3.25ex plus 1ex minus .2ex}{1em}
\titlespacing*{\subparagraph} {\parindent}{3.25ex plus 1ex minus .2ex}{1em}
\subjclass[2010]{Primary 14R15}
\newtheorem{theorem}{Theorem}[section]
\newtheorem{lemma}[theorem]{Lemma}
\newtheorem{proposition}[theorem]{Proposition}
\newtheorem{corollary}[theorem]{Corollary}
\theoremstyle{definition}
\theoremstyle{remark}
\newtheorem{remark}[theorem]{Remark}
\DeclareMathOperator{\Jac}{Jac}
\begin{document}
\title{A special case of the two-dimensional Jacobian Conjecture}

\author{Vered Moskowicz}
\address{Department of Mathematics, Bar-Ilan University, Ramat-Gan 52900, Israel.}
\email{vered.moskowicz@gmail.com}

\begin{abstract}
Let
$f: \mathbb{C}[x,y] \to \mathbb{C}[x,y]$
be a $\mathbb{C}$-algebra endomorphism
having an invertible Jacobian.

We show that for such $f$,
if, in addition,
the group of invertible elements of
$\mathbb{C}[f(x),f(y),x][1/v] \subset \mathbb{C}(x,y)$
is contained in 
$\mathbb{C}(f(x),f(y))-0$,
then $f$ is an automorphism.
Here $v \in \mathbb{C}[f(x),f(y)]-0$
is such that
$y = u/v$, with
$u \in \mathbb{C}[f(x),f(y),x]-0$.

Keller's theorem (in dimension two)
follows immediately,
since Keller's condition
$\mathbb{C}(f(x),f(y))=\mathbb{C}(x,y)$
implies that the group of invertible elements of
$\mathbb{C}[f(x),f(y),x][1/v]$
is contained in 
$\mathbb{C}(x,y)-0 = \mathbb{C}(f(x),f(y))-0$.
\end{abstract}

\maketitle

\section{Introduction}
Throughout this note,
$f: \mathbb{C}[x,y] \to \mathbb{C}[x,y]$
is a $\mathbb{C}$-algebra endomorphism
that satisfies
$\Jac(p,q) \in \mathbb{C}^*$,
where
$p:=f(x)$ and $q:=f(y)$.

Formanek's field of fractions theorem
~\cite[Theorem 2]{formanek field of fractions thm}
in dimension two says that
$\mathbb{C}(p,q,x)=\mathbb{C}(x,y)$.
{}From this it is not difficult to obtain that 
$y = u/v$, for some
$u \in \mathbb{C}[p,q,x]-0$
and
$v \in \mathbb{C}[p,q]-0$.

We show in Theorem \ref{my keller} that
for such $f$,
if, in addition,
the group of invertible elements of
$\mathbb{C}[p,q,x][1/v]$
is contained in 
$\mathbb{C}(p,q)-0$,
then $f$ is an automorphism.

Our proof of Theorem \ref{my keller} is almost identical to the proof of
Formanek's automorphism theorem ~\cite[Theorem 1]{formanek automorphism thm}; 
we did only some slight changes in his proof, 
and also used Formanek's field of fractions theorem 
and Wang's intersection theorem ~\cite[Theorem 41 (i)]{wang}.

Keller's theorem in dimension two
follows immediately from our theorem:
Assume that
$\mathbb{C}(p,q)=\mathbb{C}(x,y)$.
Then our condition of Theorem \ref{my keller} is satisfied,
because the group of invertible elements of
$\mathbb{C}[p,q,x][1/v] \subset \mathbb{C}(x,y)$
is contained in 
$\mathbb{C}(x,y)-0 = \mathbb{C}(p,q)-0$.


\section{Preliminaries}

Our Theorem \ref{my keller}
deals with the two-dimensional case only.
However, the results we rely on are valid in any dimension $n$,
so we add the following notation:
$F: \mathbb{C}[x_1,\ldots,x_n] \to \mathbb{C}[x_1,\ldots,x_n]$
is a
$\mathbb{C}$-algebra endomorphism 
that satisfies
$\Jac(F_1,\ldots,F_n) \in \mathbb{C}^*$,
where
$F_1:=F(x_1),\ldots,F_n:=F(x_n)$.
When $n=2$ we will keep the above notation, namely,
$x_1=x, x_2=y, F_1=p, F_2=q$.

\begin{theorem}[Formanek's automorphism theorem] 
Suppose that there is a polynomial 
$W$ in $\mathbb{C}[x_1,\ldots,x_n]$ 
such that
$\mathbb{C}[F_1,\ldots,F_n,W]=
\mathbb{C}[x_1,\ldots,x_n]$.
Then 
$\mathbb{C}[F_1,\ldots,F_n]=
\mathbb{C}[x_1,\ldots,x_n]$,
namely, $F$ is an automorphism.
\end{theorem}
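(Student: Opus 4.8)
The plan is to prove directly that $W\in\mathbb{C}[F_1,\ldots,F_n]$; this immediately collapses the hypothesis $\mathbb{C}[F_1,\ldots,F_n,W]=\mathbb{C}[x_1,\ldots,x_n]$ into $\mathbb{C}[F_1,\ldots,F_n]=\mathbb{C}[x_1,\ldots,x_n]$. I would write $A=\mathbb{C}[F_1,\ldots,F_n]$ and $R=\mathbb{C}[x_1,\ldots,x_n]$, with fields of fractions $K$ and $L=\mathbb{C}(x_1,\ldots,x_n)$. Since $\Jac(F_1,\ldots,F_n)\in\mathbb{C}^*$, the $F_i$ are algebraically independent over $\mathbb{C}$ (a nontrivial polynomial relation would produce a nonzero kernel vector of the Jacobian matrix), so $A$ is a polynomial ring in $n$ variables and $K=\mathbb{C}(F_1,\ldots,F_n)$. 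Because $\operatorname{tr.deg}_{\mathbb{C}}K=\operatorname{tr.deg}_{\mathbb{C}}L=n$ while $L$ is finitely generated over $K$, the extension $L/K$ is finite; I set $d:=[L:K]$.

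Next I would pin down $R$ as an explicit $A$-algebra. The substitution $T\mapsto W$ gives a surjection $A[T]\twoheadrightarrow R$ whose kernel is a nonzero prime $\mathfrak{p}$ with $\mathfrak{p}\cap A=0$ (nonzero since $W$ is algebraic over $K$). As $A$ is a UFD, so is $A[T]$, and a nonzero prime of $A[T]$ meeting $A$ only in $0$ is principal; hence $\mathfrak{p}=(g)$ with $g\in A[T]$ irreducible and $\deg_T g\ge 1$. Inverting the nonzero elements of $A$ gives $K[T]/(g)\cong K(W)=L$, so $g$ is irreducible over $K$ and $\deg_T g=[L:K]=d$. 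Thus, writing $g=a_dT^d+\cdots+a_0$ with $a_i\in A$ and $a_d\ne 0$, I have $R\cong A[T]/(g)$ and $g(W)=0$.

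The heart of the argument is to squeeze out of the Jacobian hypothesis that $g'(W)$ is a unit of $R$, where $g'=\partial g/\partial T$. Since $\Jac(F_1,\ldots,F_n)\in\mathbb{C}^*$, the Jacobian matrix expressing $dF_1,\ldots,dF_n$ in terms of $dx_1,\ldots,dx_n$ is invertible over $R$, so $dF_1,\ldots,dF_n$ form an $R$-basis of $\Omega_{R/\mathbb{C}}$ and therefore $\Omega_{R/A}=0$. From the presentation $R\cong A[T]/(g)$ one has $\Omega_{R/A}\cong R/(g'(W))$, so $g'(W)$ must be a unit of $R=\mathbb{C}[x_1,\ldots,x_n]$, i.e.\ $g'(W)\in\mathbb{C}^*$. (If one prefers to avoid K\"ahler differentials, the same conclusion follows by differentiating $g(W)=0$ in each $x_j$ and using the chain rule together with the invertibility of the Jacobian matrix.) I expect this passage from ``$\Jac$ is a unit'' to ``$g'(W)$ is a unit'' to be the main obstacle, as it is the only place where the hypothesis $\Jac\in\mathbb{C}^*$ is genuinely used.

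A degree count then finishes everything. Because $\deg_T g'=d-1<d$, no reduction modulo $g$ is needed, and $g'(W)=d\,a_d\,W^{d-1}+\cdots+a_1$ is already expressed in the $K$-basis $1,W,\ldots,W^{d-1}$ of $L$, with coefficient $d\,a_d\ne0$ on $W^{d-1}$ (here I use $\operatorname{char}\mathbb{C}=0$ and $a_d\ne0$). Were $d\ge2$, this would contradict $g'(W)\in\mathbb{C}$; hence $d=1$. Then $g=a_1T+a_0$ and $g'(W)=a_1\in\mathbb{C}^*$, so $W=-a_0/a_1\in A$, giving $\mathbb{C}[F_1,\ldots,F_n]=\mathbb{C}[F_1,\ldots,F_n,W]=\mathbb{C}[x_1,\ldots,x_n]$; that is, $F$ is an automorphism.
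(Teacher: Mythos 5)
Your proof is correct. Note that the paper itself gives no argument for this theorem --- it simply cites Formanek and van den Essen --- so the real comparison is with Formanek's original proof, which is also the template for Theorem \ref{my keller} in this paper. Your argument is essentially that proof in streamlined form: your $A[T]\twoheadrightarrow R$, $T\mapsto W$, with principal prime kernel $(g)$ is Formanek's surjection $\mathbb{C}[U_1,\ldots,U_{n+1}]\to\mathbb{C}[x_1,\ldots,x_n]$ with kernel $(H)$, and your conclusion that $g'(W)$ is a unit is exactly his $\alpha(\partial H/\partial U_{n+1})=\lambda/d$ with $d\in\mathbb{C}^*$. The two genuine repackagings are (a) deriving $g'(W)\in\mathbb{C}^*$ from $\Omega_{R/A}=0$ via the conormal sequence, which is cleaner and more conceptual than the explicit chain-rule-plus-Cramer computation (though equivalent to it, as you note), and (b) replacing Formanek's $U_{n+1}$-degree comparison by the observation that $g'(W)=d\,a_dW^{d-1}+\cdots+a_1$ is the expansion of a constant in the $K$-basis $1,W,\ldots,W^{d-1}$, forcing $d\,a_d=0$ and hence $d=1$ in characteristic zero; this is a slicker way to finish. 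The supporting steps all check: $\mathfrak{p}$ has height one because it survives in $K[T]$, hence is principal in the UFD $A[T]$; the generator has unit content, so Gauss's lemma gives irreducibility over $K$ and $\deg_T g=[L:K]$; and $L=K(W)$ since $R=A[W]$.
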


\begin{proof}
See ~\cite[Theorem 1]{formanek automorphism thm}
and
~\cite[page 13, Exercise 9]{essen book}.
\end{proof}

\begin{itemize}
\item If there exists
$w \in \mathbb{C}[x,y]$ 
such that
$\mathbb{C}[p,q,w]=\mathbb{C}[x,y]$,
then 
$\mathbb{C}[p,q]=\mathbb{C}[x,y]$,
namely, $f$ is an automorphism.
\end{itemize}

\begin{theorem}[Formanek's field of fractions theorem]
$$
\mathbb{C}(F_1,\ldots,F_n,x_1,\ldots,x_{n-1})=
\mathbb{C}(x_1,\ldots,x_n).
$$
\end{theorem}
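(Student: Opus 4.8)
The plan is to prove the equivalent statement that $x_n \in L:=\mathbb{C}(F_1,\ldots,F_n,x_1,\ldots,x_{n-1})$, the inclusion $L\subseteq\mathbb{C}(x_1,\ldots,x_n)$ being obvious. First I would note that $\Jac(F_1,\ldots,F_n)\in\mathbb{C}^*$ forces $F_1,\ldots,F_n$ to be algebraically independent over $\mathbb{C}$ (in characteristic zero a non-vanishing Jacobian is equivalent to algebraic independence). Hence $\mathbb{C}(F_1,\ldots,F_n)$ has transcendence degree $n$, and since it is contained in $L$, which in turn is contained in $\mathbb{C}(x_1,\ldots,x_n)$ of the same transcendence degree $n$, the extension $\mathbb{C}(x_1,\ldots,x_n)/L$ is algebraic; being finitely generated it is finite, and being in characteristic zero it is separable. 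The whole problem thus reduces to showing that this finite separable extension has degree one.

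To compute the degree I would make the extension explicit. Set $\Phi_j(T):=F_j(x_1,\ldots,x_{n-1},T)$, a polynomial in $L[T]$ since its coefficients lie in $\mathbb{C}[x_1,\ldots,x_{n-1}]\subseteq L$. The identities $\Phi_j(x_n)=F_j\in L$ show that $x_n$ is a common root of the $n$ polynomials $\Phi_j(T)-F_j\in L[T]$, so the minimal polynomial $m$ of $x_n$ over $L$ divides each of them and $\deg m=[\mathbb{C}(x_1,\ldots,x_n):L]$. It therefore suffices to prove that $m$ is linear, that is, that $x_n$ is the only common root of the family $\{\Phi_j(T)-F_j\}_j$; equivalently, that the morphism $(F_1,\ldots,F_n,x_1,\ldots,x_{n-1})\colon\mathbb{A}^n\to\mathbb{A}^{2n-1}$ is birational onto its image, i.e. that the coordinates $x_1,\ldots,x_{n-1}$ separate the points of a generic fibre of $F$. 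At this point the hypothesis already yields something: since the last column $(\partial F_1/\partial x_n,\ldots,\partial F_n/\partial x_n)$ of the Jacobian matrix is a column of a matrix of determinant $\Jac(F_1,\ldots,F_n)\in\mathbb{C}^*$, it is nonzero, so some $\partial F_j/\partial x_n=\Phi_j'(x_n)$ is nonzero in $\mathbb{C}(x_1,\ldots,x_n)$; thus $x_n$ is a \emph{simple} root of the corresponding $\Phi_j(T)-F_j$. This is the local, unramified content of the Jacobian condition, and it recovers separability concretely.

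The main obstacle is precisely the passage from this local statement to the required global one. A simple common root need not be the unique common root -- an immersed polynomial curve can self-intersect -- so the pointwise non-vanishing of the Jacobian does not by itself force $m$ to be linear. To close the gap I would use that $\Jac(F_1,\ldots,F_n)$ is a nonzero \emph{constant}, not merely a nonzero polynomial, and turn the question into a genuinely global count: eliminate $T$ from the system $\Phi_j(T)=F_j$ (for $n=2$, compute $\gcd\big(p(x,T)-p,\ q(x,T)-q\big)$ in $L[T]$ via its subresultants) and use the constancy of the Jacobian both to bound the relevant degrees and to control the behaviour at infinity, thereby ruling out any second branch of the generic fibre sharing the coordinates $x_1,\ldots,x_{n-1}$. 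Proving this global injectivity is where the real work lies; the transcendence-degree reduction and separability are routine, and simplicity of the root comes for free from the Jacobian, but excluding extra conjugates of $x_n$ over $L$ is the genuine difficulty.
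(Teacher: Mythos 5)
Your reductions are fine as far as they go: the transcendence-degree argument showing $\mathbb{C}(x_1,\ldots,x_n)/L$ is finite and separable is correct, the identification of $[\mathbb{C}(x_1,\ldots,x_n):L]$ with the degree of the minimal polynomial $m$ of $x_n$ over $L$ is correct, and the observation that some $\partial F_j/\partial x_n\neq 0$ makes $x_n$ a simple root of the corresponding $\Phi_j(T)-F_j$ is correct. But the proof stops exactly where the theorem begins. The entire content of Formanek's result is the global claim you defer to the last paragraph: that $x_n$ is the \emph{only} common root of the family $\{\Phi_j(T)-F_j\}_j$, equivalently that two distinct points of a generic fibre of $F$ cannot share their first $n-1$ coordinates. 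Your proposed route for this step --- ``eliminate $T$ \ldots use the constancy of the Jacobian both to bound the relevant degrees and to control the behaviour at infinity, thereby ruling out any second branch'' --- is a statement of intent, not an argument; no degree bound is computed, no subresultant is analysed, and nothing is said about why constancy (as opposed to mere non-vanishing) of the Jacobian excludes a second conjugate of $x_n$ over $L$. Since an everywhere-unramified polynomial map can certainly fail to be injective on lines (immersed curves self-intersect, as you yourself note), the local information you have extracted does not bridge this gap, and you candidly say so. As it stands the proposal proves only that the extension is finite and separable, which is routine.

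For comparison: the paper does not prove this theorem either; it cites Formanek's \emph{Observations about the Jacobian Conjecture} (Theorem 2), and notes that the two-dimensional case was proved earlier by Moh and by Hamann (who reduce to $p$ monic in $y$ and argue with that normalization). So there is no in-paper argument to measure yours against; but measured against the cited sources, your write-up reproduces the easy preliminaries and leaves the theorem itself unproved. To complete it you would need either to work through Formanek's actual elimination/degree argument or to supply the Moh--Hamann argument in the case you need ($n=2$), rather than gesture at it.
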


\begin{proof}
See ~\cite[Theorem 2]{formanek field of fractions thm}.
\end{proof}

\begin{itemize}
\item 
$\mathbb{C}(p,q,x)=\mathbb{C}(x,y)$
and 
$\mathbb{C}(p,q,y)=\mathbb{C}(x,y)$.
\end{itemize}

Formanek remarks that when $n=2$,
$\mathbb{C}(p,q,w)=\mathbb{C}(x,y)$,
where $w$ is the image of $x$ under any automorphism of 
$\mathbb{C}[x,y]$;
see
~\cite[page 370, just before Theorem 6]{formanek field of fractions thm}.

The two-dimensional case was already proved by Moh ~\cite[page 151]{moh} and by 
Hamann ~\cite[Lemma 2.1, Proposition 2.1(2)]{hamann}.
Moh and Hamann assumed that $p$ is monic in $y$, 
but this is really not a restriction.

It is easy to see that:
\begin{corollary}\label{cor fractions}
There exist 
$u \in
\mathbb{C}[p,q,x]-0$
and 
$v \in
\mathbb{C}[p,q]-0$
such that
$y = u/v$.
\end{corollary}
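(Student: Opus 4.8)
The plan is to read off the result from Formanek's field of fractions theorem, which in dimension two gives $\mathbb{C}(p,q,x)=\mathbb{C}(x,y)$, and then to upgrade the mere membership $y\in\mathbb{C}(p,q,x)$ to an expression whose denominator is already free of $x$. The one point that genuinely requires an idea is that $x$ is algebraic over $\mathbb{C}(p,q)$; once that is in place, $y$ can be written as an honest polynomial in $x$ with coefficients in $\mathbb{C}(p,q)$, and clearing denominators lands the denominator in $\mathbb{C}[p,q]$.

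First I would record that $p$ and $q$ are algebraically independent over $\mathbb{C}$. Indeed, if they satisfied a nontrivial relation $G(p,q)=0$ with $G\in\mathbb{C}[s,t]$ of minimal degree, then differentiating with respect to $x$ and to $y$ would make the vector $\bigl(G_s(p,q),G_t(p,q)\bigr)$ a left null vector of the Jacobian matrix of $(p,q)$. Since $\Jac(p,q)\in\mathbb{C}^*$, that matrix is invertible over $\mathbb{C}(x,y)$, forcing $G_s(p,q)=G_t(p,q)=0$; by minimality of $G$ this makes $G$ constant, a contradiction. Hence $\operatorname{trdeg}_{\mathbb{C}}\mathbb{C}(p,q)=2=\operatorname{trdeg}_{\mathbb{C}}\mathbb{C}(x,y)$.

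Next I would combine this with Formanek's theorem. Since $\mathbb{C}(p,q,x)=\mathbb{C}(x,y)$ has transcendence degree $2$ over $\mathbb{C}$, and $\mathbb{C}(p,q)$ already has transcendence degree $2$, the extension $\mathbb{C}(p,q,x)/\mathbb{C}(p,q)$ is algebraic; in particular $x$ is algebraic over the field $\mathbb{C}(p,q)$. Consequently the ring $\mathbb{C}(p,q)[x]$ is itself a field and equals $\mathbb{C}(p,q,x)$. As $y\in\mathbb{C}(x,y)=\mathbb{C}(p,q,x)=\mathbb{C}(p,q)[x]$, I may write $y=\sum_{i=0}^{m} d_i\, x^i$ for some $m$ and some coefficients $d_i\in\mathbb{C}(p,q)$.

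Finally I would clear denominators. Choosing a common denominator $v\in\mathbb{C}[p,q]-0$ of the finitely many $d_i$, so that $v d_i\in\mathbb{C}[p,q]$ for each $i$, gives $v y=\sum_{i=0}^{m}(v d_i)\,x^i=:u\in\mathbb{C}[p,q,x]$, and $u=vy\neq 0$ because $v\neq 0$ and $y\neq 0$. This produces $y=u/v$ with $u\in\mathbb{C}[p,q,x]-0$ and $v\in\mathbb{C}[p,q]-0$, as claimed. There is no serious obstacle—the statement is indeed \emph{easy to see}—but the step that carries the weight is the algebraicity of $x$ over $\mathbb{C}(p,q)$, since it is exactly what converts the a priori arbitrary rational function $y\in\mathbb{C}(p,q,x)$ into a polynomial in $x$ whose denominators can be absorbed into $\mathbb{C}[p,q]$.
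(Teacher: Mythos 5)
Your proof is correct and follows essentially the same route as the paper: use Formanek's theorem to get $y\in\mathbb{C}(p,q)(x)=\mathbb{C}(p,q)[x]$ via the algebraicity of $x$ over $\mathbb{C}(p,q)$, then clear the denominators of the coefficients. The only difference is that you spell out why $x$ is algebraic over $\mathbb{C}(p,q)$ (algebraic independence of $p,q$ from the Jacobian condition plus a transcendence-degree count), a fact the paper's proof simply asserts.
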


\begin{proof}
$y \in 
\mathbb{C}(x,y)=\mathbb{C}(p,q,x)=\mathbb{C}(p,q)(x)$.
Since $x$ is algebraic over $\mathbb{C}(p,q)$,
we have
$\mathbb{C}(p,q)(x)=\mathbb{C}(p,q)[x]$
(see ~\cite[Remark 4.7]{rowen}).
Hence,
$y \in \mathbb{C}(p,q)[x]$.
Therefore, there exist
$a_i, b_i \in \mathbb{C}[p,q]$
($b_i \neq 0$)
such that
$y= \sum (a_i/b_i)x^i$.
Then if we denote
$B =\prod{b_i}$ and 
$B_i$ the product of the $b_j$'s except $b_i$,
we get
$y= (1/B) \sum B_i a_i x^i$.
Just take $v:=B$ and 
$u:=\sum B_i a_i x^i$.
\end{proof}

\begin{theorem}[Wang's intersection theorem]
$\mathbb{C}(F_1,\ldots,F_n) \cap \mathbb{C}[x_1,\ldots,x_n] = 
\mathbb{C}[F_1,\ldots,F_n]$.
\end{theorem}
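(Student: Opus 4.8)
The plan is to prove the two inclusions separately, with all the content lying in the inclusion $\mathbb{C}(F_1,\ldots,F_n)\cap\mathbb{C}[x_1,\ldots,x_n]\subseteq\mathbb{C}[F_1,\ldots,F_n]$. I will write $A:=\mathbb{C}[F_1,\ldots,F_n]$, $B:=\mathbb{C}[x_1,\ldots,x_n]$ and $K:=\mathbb{C}(F_1,\ldots,F_n)=\operatorname{Frac}(A)$; the reverse inclusion $A\subseteq K\cap B$ is immediate. Because $\Jac(F_1,\ldots,F_n)\in\mathbb{C}^*$, the $F_i$ are algebraically independent, so $A$ is itself a polynomial ring, in particular a UFD and hence a normal (Krull) domain with $A=\bigcap_{\mathfrak{q}}A_{\mathfrak{q}}$, the intersection ranging over the height-one primes $\mathfrak{q}$ of $A$. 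Thus it suffices to fix such a prime $\mathfrak{q}=(\sigma)$ and to prove that every $g\in K\cap B$ satisfies $w_{\mathfrak{q}}(g)\geq 0$, where $w_{\mathfrak{q}}$ is the discrete valuation attached to the DVR $A_{\mathfrak{q}}$.

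The same Jacobian hypothesis shows that the inclusion $A\hookrightarrow B$, viewed as a morphism $\phi\colon\operatorname{Spec}B\to\operatorname{Spec}A$, is unramified: invertibility of the Jacobian matrix over $B$ says precisely that $dF_1,\ldots,dF_n$ is a $B$-basis of $\Omega_{B/\mathbb{C}}$, whence $\Omega_{B/A}=0$. Since an unramified morphism of finite type is quasi-finite, $\phi$ has finite fibres, and this is all I will use about it besides the ramification indices below.

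The key step, which I expect to be the main obstacle, is to show that the generic point of the divisor $V(\sigma)$ lies in the image of $\phi$; equivalently, that the complement of $\phi(\operatorname{Spec}B)$ contains no divisor. Suppose to the contrary that no prime of $B$ contracts to $\mathfrak{q}$. Then the image of $\phi^{-1}(V(\sigma))$ lies in $V(\sigma)$ but avoids its generic point, hence lies in a closed subset of dimension $\leq n-2$; quasi-finiteness of $\phi$ then forces $\dim\phi^{-1}(V(\sigma))\leq n-2$. But $\phi^{-1}(V(\sigma))$ is the zero locus in $\operatorname{Spec}B=\mathbb{A}^n$ of the single polynomial $\sigma(F_1,\ldots,F_n)\in B$, so by Krull's principal ideal theorem it is either empty or of pure codimension one. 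It must therefore be empty, i.e. $\sigma(F_1,\ldots,F_n)$ vanishes nowhere on $\mathbb{C}^n$; by the Nullstellensatz it is a nonzero constant, contradicting the algebraic independence of $F_1,\ldots,F_n$ (as $\sigma$ is non-constant). This is exactly where the special shape of $B$ is indispensable: in a polynomial ring hypersurfaces are pure codimension one and nowhere-vanishing functions are constant.

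It remains to assemble the conclusion. By the previous paragraph there is a prime $\mathfrak{p}$ of $B$ with $\mathfrak{p}\cap A=\mathfrak{q}$, and $\dim(B/\mathfrak{p})=n-1$ because $\phi$ is quasi-finite over the $(n-1)$-dimensional $V(\sigma)$, so $\mathfrak{p}$ has height one. Then $A_{\mathfrak{q}}\to B_{\mathfrak{p}}$ is a local extension of discrete valuation rings which is unramified, hence of ramification index one, so the valuation $v_{\mathfrak{p}}$ restricts to $w_{\mathfrak{q}}$ on $K$. For $g\in K\cap B$ this gives $w_{\mathfrak{q}}(g)=v_{\mathfrak{p}}(g)\geq 0$, the inequality because $g\in B\subseteq B_{\mathfrak{p}}$. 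As $\mathfrak{q}$ was an arbitrary height-one prime of $A$, we conclude $g\in\bigcap_{\mathfrak{q}}A_{\mathfrak{q}}=A$, which finishes the proof.
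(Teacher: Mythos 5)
Your argument is essentially correct, but note that the paper does not prove this statement at all: it is quoted as a known result with pointers to Wang's paper and to van den Essen's book, so there is no in-paper proof to match. What you have written is a self-contained proof in the spirit of the ``more general version due to Bass'' that the paper mentions in passing: you exploit that $\Jac(F_1,\dots,F_n)\in\mathbb{C}^*$ makes $A=\mathbb{C}[F_1,\dots,F_n]\hookrightarrow B=\mathbb{C}[x_1,\dots,x_n]$ unramified, reduce to checking nonnegativity of each divisorial valuation of the normal domain $A$ via $A=\bigcap_{\mathfrak{q}}A_{\mathfrak{q}}$, and use Krull's principal ideal theorem plus the triviality of units in $B$ to show every height-one prime of $A$ is dominated by one of $B$. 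All the main steps check out: the surjectivity-onto-divisors argument (quasi-finiteness bounds $\dim\phi^{-1}(V(\sigma))$ from above, the principal ideal theorem bounds it from below unless the fibre is empty, and emptiness forces $\sigma(F_1,\dots,F_n)$ to be a unit, contradicting algebraic independence) is exactly the right mechanism, and the final valuation comparison works even without unramifiedness, since any ramification index $e\geq 1$ already gives $w_{\mathfrak{q}}(g)\geq 0$ from $v_{\mathfrak{p}}(g)\geq 0$. One small imprecision: quasi-finiteness over the $(n-1)$-dimensional $V(\sigma)$ only gives $\dim(B/\mathfrak{p})\leq n-1$, i.e.\ a lower bound on the height of $\mathfrak{p}$, not the equality you assert; to get a height-one $\mathfrak{p}$ over $\mathfrak{q}$ you should take $\mathfrak{p}$ minimal over $\sigma(F_1,\dots,F_n)B$ (Krull again gives height one, and its contraction to $A$ is a prime squeezed between $(\sigma)$ and $\mathfrak{q}=(\sigma)$, hence equal to $\mathfrak{q}$). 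With that adjustment the proof is complete, and it is arguably more transparent than chasing the cited references.
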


\begin{proof}
See ~\cite[Theorem 41 (i)]{wang}
and
~\cite[Corollary 1.1.34 (ii)]{essen book}.
\end{proof}

Wang's intersection theorem has a more general version due to Bass
~\cite[Remark after Corollary 1.3, page 74]{bass},
~\cite[Proposition D.1.7]{essen book};
we will not need the more general version here.

\begin{itemize}
\item $\mathbb{C}(p,q) \cap \mathbb{C}[x,y] = \mathbb{C}[p,q]$.
\end{itemize}

The following is immediate:
\begin{corollary}\label{cor intersection}
$\mathbb{C}(p,q) \cap R = \mathbb{C}[p,q]$,
for any 
$\mathbb{C}[p,q] \subseteq R \subseteq \mathbb{C}[x,y]$. 
In particular,
$\mathbb{C}(p,q) \cap \mathbb{C}[p,q,x] = \mathbb{C}[p,q]$.
\end{corollary}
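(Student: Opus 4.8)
The plan is to deduce this directly from Wang's intersection theorem (in the displayed $n=2$ form $\mathbb{C}(p,q) \cap \mathbb{C}[x,y] = \mathbb{C}[p,q]$) by a simple sandwiching argument, using the chain $\mathbb{C}[p,q] \subseteq R \subseteq \mathbb{C}[x,y]$ as the two-sided squeeze. I would prove the two inclusions separately.

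First I would establish $\mathbb{C}[p,q] \subseteq \mathbb{C}(p,q) \cap R$. The ring $\mathbb{C}[p,q]$ is contained in its own field of fractions $\mathbb{C}(p,q)$, and by the hypothesis $\mathbb{C}[p,q] \subseteq R$ it is also contained in $R$; hence it lies in the intersection. For the reverse inclusion $\mathbb{C}(p,q) \cap R \subseteq \mathbb{C}[p,q]$, I would use the hypothesis $R \subseteq \mathbb{C}[x,y]$ together with monotonicity of intersection to get $\mathbb{C}(p,q) \cap R \subseteq \mathbb{C}(p,q) \cap \mathbb{C}[x,y]$, and then invoke Wang's intersection theorem to identify the right-hand side with $\mathbb{C}[p,q]$. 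Combining the two inclusions yields the equality $\mathbb{C}(p,q) \cap R = \mathbb{C}[p,q]$.

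Finally, the \emph{in particular} clause is just the instance $R = \mathbb{C}[p,q,x]$, so I would only need to remark that this $R$ satisfies the required chain $\mathbb{C}[p,q] \subseteq \mathbb{C}[p,q,x] \subseteq \mathbb{C}[x,y]$ and then apply the general statement. I do not expect any genuine obstacle here: the entire mathematical content is supplied by Wang's theorem, which we are permitted to assume, and the remaining reasoning is a purely set-theoretic squeeze. The only point worth a moment's care is to apply the monotonicity of $\cap R$ in the correct direction (enlarging the ambient ring $R$ to $\mathbb{C}[x,y]$ can only enlarge the intersection), so that the bound runs the right way.
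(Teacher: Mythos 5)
Your proposal is correct and is essentially identical to the paper's own proof: both directions are handled the same way, with the inclusion $\mathbb{C}(p,q)\cap R \subseteq \mathbb{C}(p,q)\cap\mathbb{C}[x,y] = \mathbb{C}[p,q]$ supplied by monotonicity plus Wang's intersection theorem, and the reverse inclusion being trivial. Nothing further is needed.
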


\begin{proof}
$\mathbb{C}(p,q) \cap R \subseteq
\mathbb{C}(p,q) \cap \mathbb{C}[x,y] = \mathbb{C}[p,q]$.
The other inclusion,
$\mathbb{C}(p,q) \cap R \supseteq \mathbb{C}[p,q]$,
is trivial.
\end{proof}

\begin{theorem}[Keller's theorem]
If
$\mathbb{C}(F_1,\ldots,F_n)=\mathbb{C}(x_1,\ldots,x_n)$,
then $F$ is an automorphism.
\end{theorem}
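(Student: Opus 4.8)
The plan is to read Keller's theorem as an immediate consequence of Wang's intersection theorem, with the field hypothesis doing essentially all of the work. The standing assumption $\Jac(F_1,\ldots,F_n)\in\mathbb{C}^*$ is already built into the hypotheses of Wang's theorem, so I may invoke it freely, and no further preparation is needed.

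First I would record the chain of equalities that the hypothesis produces. By Wang's intersection theorem,
\[
\mathbb{C}[F_1,\ldots,F_n]=\mathbb{C}(F_1,\ldots,F_n)\cap\mathbb{C}[x_1,\ldots,x_n].
\]
Feeding in the assumption $\mathbb{C}(F_1,\ldots,F_n)=\mathbb{C}(x_1,\ldots,x_n)$ collapses the right-hand side, since
\[
\mathbb{C}(x_1,\ldots,x_n)\cap\mathbb{C}[x_1,\ldots,x_n]=\mathbb{C}[x_1,\ldots,x_n].
\]
Hence $\mathbb{C}[F_1,\ldots,F_n]=\mathbb{C}[x_1,\ldots,x_n]$, which is exactly the equality of rings that we want.

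The final step is to translate this ring equality into the word \emph{automorphism}. The identity $\mathbb{C}[F_1,\ldots,F_n]=\mathbb{C}[x_1,\ldots,x_n]$ says precisely that $F$ is surjective, and a surjective $\mathbb{C}$-algebra endomorphism of the polynomial ring $\mathbb{C}[x_1,\ldots,x_n]$ is automatically bijective; this is the same convention already in force when Formanek's automorphism theorem is restated as ``$\mathbb{C}[F_1,\ldots,F_n]=\mathbb{C}[x_1,\ldots,x_n]$, namely, $F$ is an automorphism.'' I would either cite this standard fact or simply note that the ring equality is the conclusion.

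I expect no genuine obstacle here: all the depth sits inside Wang's intersection theorem, which we are permitted to assume, and the remaining argument is a two-line set-theoretic manipulation. If one wished to avoid Wang's theorem, an alternative route would be to exhibit a polynomial $W$ with $\mathbb{C}[F_1,\ldots,F_n,W]=\mathbb{C}[x_1,\ldots,x_n]$ and then apply Formanek's automorphism theorem; but extracting such a $W$ from the mere field hypothesis is essentially what Wang's theorem already supplies, so the direct route through Wang's intersection theorem is the cleaner one.
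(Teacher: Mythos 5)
Your argument is correct, and it is essentially the proof the paper points to: the paper itself only cites the literature (Keller, van den Essen Corollary 1.1.35, Bass--Connell--Wright), but the cited derivation is exactly yours, namely feeding the hypothesis $\mathbb{C}(F_1,\ldots,F_n)=\mathbb{C}(x_1,\ldots,x_n)$ into Wang's intersection theorem to get $\mathbb{C}[F_1,\ldots,F_n]=\mathbb{C}[x_1,\ldots,x_n]$. The closing observation that surjectivity suffices is also consistent with the paper's convention; alternatively, injectivity is immediate since the Jacobian hypothesis makes $F_1,\ldots,F_n$ algebraically independent.
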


$F$ as in Keller's theorem is called birational
($F$ has an inverse formed of rational functions).

\begin{proof}
See ~\cite{keller}, ~\cite[Corollary 1.1.35]{essen book} 
and ~\cite[Theorem 2.1]{bcw}.
\end{proof}

\begin{itemize}
\item If $\mathbb{C}(p,q)=\mathbb{C}(x,y)$,
then $f$ is an automorphism.
\end{itemize}

\begin{remark}
Notice that the above results are dealing with 
$k[x_1,\ldots,x_n]$, where $k$ is:
\begin{itemize}
\item $\mathbb{C}$: Formanek's field of fractions theorem.
\item a field of characteristic zero: Formanek's automorphism theorem.
\item any field: Keller's theorem.
\item a UFD: Wang's intersection theorem.
\end{itemize}

We have not checked if Formanek's field of fractions theorem
is valid over a more general field than $\mathbb{C}$;
if, for example, it is valid over any algebraic closed field 
of characteristic zero, then our Theorem \ref{my keller}
is valid over any algebraic closed field 
of characteristic zero, not just over $\mathbb{C}$.

Anyway, working over $\mathbb{C}$ is good enough in view of
~\cite[Lemma 1.1.14]{essen book}.
\end{remark}

\section{A new proof of Keller's theorem in dimension two}


Our proof of Theorem \ref{my keller} relies heavily on the proof of Formanek's automorphism theorem;
we did only some slight changes in his proof, 
changes that seem quite natural in view of 
Corollary \ref{cor fractions}:

Although we do not know if 
$\mathbb{C}[p,q,x]=\mathbb{C}[x,y]$
(if so, then $f$ is an automorphism
by Formanek's automorphism theorem),
we do know that $\mathbb{C}(p,q,x)=\mathbb{C}(x,y)$
(by Formanek's field of fractions theorem),
so by Corollary \ref{cor fractions},
$y=u/v$ for some 
$u \in \mathbb{C}[p,q,x]-0$
and 
$v \in \mathbb{C}[p,q]-0$.
Therefore, it seems natural to consider
$\beta: \mathbb{C}[U_1,U_2,U_3][1/V] \to \mathbb{C}[p,q,x][1/v]$,
where $V=v(U_1,U_2)$.

This $\beta$ has $x$ and $y$ in its image, 
so most of Formanek's proof can be adjusted here,
except that the group of invertible elements
of $\mathbb{C}[p,q,x][1/v]$
is not as easily described as 
the group of invertible elements
of $\mathbb{C}[x,y]$,
which is obviously $\mathbb{C}^*$.

Only after adding a condition on the group of invertible elements
of $\mathbb{C}[p,q,x][1/v]$,
we are able to show that $f$ is an automorphism.

Now we are ready to bring our theorem;
we recommend the reader to first read the proof of
Formanek's automorphism theorem, and then read our proof,
with $p,q,x$ in our proof instead of $F_1,F_2,F_3$ in his proof.
 
\begin{theorem}[Main Theorem]\label{my keller}
If the group of invertible elements of
$\mathbb{C}[p,q,x][1/v]$
is contained in 
$\mathbb{C}(p,q)-0$,
then $f$ is an automorphism.
\end{theorem}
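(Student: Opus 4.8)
The plan is to mimic Formanek's automorphism-theorem proof, replacing his polynomial ring $\mathbb{C}[x_1,\ldots,x_n]$ by the localized ring $\mathbb{C}[p,q,x][1/v]$ and using the factorization $y=u/v$ from Corollary~\ref{cor fractions}. The starting point is that the map $\beta\colon \mathbb{C}[U_1,U_2,U_3][1/V]\to\mathbb{C}[p,q,x][1/v]$ sending $U_1\mapsto p$, $U_2\mapsto q$, $U_3\mapsto x$ (where $V=v(U_1,U_2)$) is surjective and has both $x$ and $y=u/v$ in its image, so $\mathbb{C}[x,y]\subseteq\mathbb{C}[p,q,x][1/v]$. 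I would first argue that $\beta$ is in fact an \emph{isomorphism}: the Jacobian hypothesis forces $p,q,x$ to be algebraically independent over $\mathbb{C}$ (since $\Jac(p,q)\in\mathbb{C}^*$ means $p,q$ are already algebraically independent, and then $x$ is transcendental over $\mathbb{C}(p,q)$ would fail only if $f$ were already seen to be an automorphism), so there are no relations to kill, exactly as in Formanek.

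The heart of the argument is a dimension/transcendence-degree count combined with the invertible-elements hypothesis. Following Formanek, I would consider the inclusions
\[
\mathbb{C}[p,q]\subseteq\mathbb{C}[p,q,x][1/v]
\]
and study how the units behave. The point of Formanek's proof is to produce, from the algebraic structure, a polynomial $W$ witnessing $\mathbb{C}[F_1,\ldots,F_n,W]=\mathbb{C}[x_1,\ldots,x_n]$, after which his automorphism theorem applies. In our two-dimensional setting the analogous goal is to produce $w\in\mathbb{C}[x,y]$ with $\mathbb{C}[p,q,w]=\mathbb{C}[x,y]$, which by the itemized corollary to Formanek's automorphism theorem yields that $f$ is an automorphism. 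The natural candidate is $w=x$ itself, and the task reduces to showing $\mathbb{C}[p,q,x]=\mathbb{C}[x,y]$, equivalently $y\in\mathbb{C}[p,q,x]$, equivalently (given $y=u/v$) that $v$ is a \emph{unit} in $\mathbb{C}[p,q,x]$, or at least that the denominator can be cleared.

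This is where the invertible-elements hypothesis enters, and where I expect the main obstacle to lie. Since $v\in\mathbb{C}[p,q]-0$ is by construction invertible in $\mathbb{C}[p,q,x][1/v]$, the hypothesis places $v$ (and $1/v$) inside $\mathbb{C}(p,q)-0$—which is automatic—but more usefully it constrains \emph{all} units of the localized ring to lie in the rational function field $\mathbb{C}(p,q)$. I would use this to show that $v$ must actually be a nonzero constant: the idea is that if $v$ were a nonconstant polynomial in $p,q$, then localizing at $v$ would create units in $\mathbb{C}[p,q,x][1/v]$ that, via the presence of $x$, escape $\mathbb{C}(p,q)$, contradicting the hypothesis. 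Concretely, I would invoke Wang's intersection theorem through Corollary~\ref{cor intersection}, which gives $\mathbb{C}(p,q)\cap\mathbb{C}[p,q,x]=\mathbb{C}[p,q]$, to pin down the units of the localization precisely and force $v\in\mathbb{C}^*$. Once $v$ is a constant, $y=u/v\in\mathbb{C}[p,q,x]$, hence $\mathbb{C}[p,q,x]=\mathbb{C}[x,y]$, and Formanek's automorphism theorem finishes the proof.

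The delicate point—and the one requiring care rather than routine manipulation—is the precise description of the unit group of $\mathbb{C}[p,q,x][1/v]$ and the deduction that the hypothesis forces $v$ to be a unit; everything before and after that step is bookkeeping transported from Formanek's argument with $p,q,x$ in place of $F_1,F_2,F_3$.
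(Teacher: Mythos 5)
Your proposal contains two genuine errors, and it omits the actual mechanism of the proof. First, the claim that $\beta$ is an isomorphism because ``$p,q,x$ are algebraically independent over $\mathbb{C}$'' is false: $p$, $q$, $x$ all lie in $\mathbb{C}[x,y]$, which has transcendence degree $2$ over $\mathbb{C}$, so any three of its elements are algebraically dependent; indeed $x$ is \emph{always} algebraic over $\mathbb{C}(p,q)$ (this is exactly what Corollary \ref{cor fractions} uses). The kernel of $\alpha$ is a nonzero height-one prime, principal with generator $H$, essentially the minimal polynomial of $x$ over $\mathbb{C}(p,q)$ with denominators cleared --- and this $H$ is the central object of the whole argument, not a relation that can be argued away.

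Second, your endgame --- using the unit-group hypothesis to force $v\in\mathbb{C}^*$ and hence $y=u/v\in\mathbb{C}[p,q,x]$ --- cannot work. The element $v$ is merely a chosen denominator, and the hypothesis is perfectly consistent with $v$ being a nonconstant polynomial: Theorem \ref{my keller special case} exhibits exactly the situation where $v=v_1\cdots v_m$ is nonconstant, its irreducible factors stay irreducible in $\mathbb{C}[x,y]$, and the unit group of $\mathbb{C}[p,q,x][1/v]$ is nevertheless contained in $\mathbb{C}(p,q)-0$. So no contradiction arises from nonconstant $v$, and the reduction to $\mathbb{C}[p,q,x]=\mathbb{C}[x,y]$ is not available. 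What the paper actually does is quite different: it expresses $U_1-p(T_1,T_2)$ and $U_2-q(T_1,T_2)$ as multiples of $H$ in the localized ring, differentiates, and applies Cramer's Rule to obtain $\beta(\partial H/\partial U_3)=\lambda/d$ where $d$ is a \emph{unit} of $\mathbb{C}[p,q,x][1/v]$; the hypothesis then places $d$ in $\mathbb{C}(p,q)-0$, Wang's intersection theorem puts its numerator in $\mathbb{C}[p,q]$, and a $U_3$-degree comparison (possible precisely because $V$ and the lift $D$ have $U_3$-degree zero) forces $\deg_{U_3}H=1$, i.e.\ $x\in\mathbb{C}(p,q)$, hence $x\in\mathbb{C}[p,q]$ by Wang, and Formanek's automorphism theorem finishes. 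This Cramer's Rule and degree-count step is the heart of the proof and is entirely absent from your sketch; it is not ``bookkeeping transported from Formanek.''
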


\begin{proof}
By Corollary \ref{cor fractions},
there exist 
$u \in \mathbb{C}[p,q,x]-0$
and
$v \in \mathbb{C}[p,q]-0$
such that
$y= u/v$.

Let $U_1,U_2,U_3$ be independent variables over $\mathbb{C}$.
Define 
$\alpha: \mathbb{C}[U_1,U_2,U_3] \to \mathbb{C}[p,q,x]$
by
$\alpha(U_1):=p$,
$\alpha(U_2):=q$,
$\alpha(U_3):=x$.
Clearly, $\alpha$ is surjective.

Claim: The kernel of $\alpha$ is a principal prime ideal
of $\mathbb{C}[U_1,U_2,U_3]$.

Proof of claim: $\mathbb{C}(U_1,U_2,U_3)$ has transcendence degree $3$
over $\mathbb{C}$,
and $\mathbb{C}(p,q,x)=\mathbb{C}(x,y)$ has transcendence degree $2$
over $\mathbb{C}$.
{}From ~\cite[Theorem 5.6]{crt},
$\mathbb{C}[U_1,U_2,U_3]$ is of Krull dimension $3$
and 
$\mathbb{C}[p,q,x]$ is of Krull dimension $2$.
Hence, the kernel of $\alpha$ is of height $1$,
and in a Noetherian UFD a height one prime ideal is principal,
see ~\cite[Theorem 15.9]{pete}.

Denote by $H$ a generator of the kernel of $\alpha$:
$H= H_r U_3^r+\ldots+H_1U_3+H_0$,
where $H_j \in \mathbb{C}[U_1,U_2]$ and $r \geq 1$.
$H$ is a product of the minimal polynomial for $x$ over 
$\mathbb{C}(p,q)$ by some element $H_r$ of $\mathbb{C}[U_1,U_2]$
which clears the denominators of the minimal polynomial 
for $x$ over $\mathbb{C}(p,q)$.
Notice that $r=0$ is impossible, 
since then 
$H= H_0(U_1,U_2)$:
\begin{itemize}
\item
If $H_0(U_1,U_2) \equiv 0$,
then $H(U_1,U_2,U_3) \equiv 0$, 
so the kernel of $\alpha$ is zero,
but then we have
$\mathbb{C}[U_1,U_2,U_3] \cong \mathbb{C}[p,q,x]$,
which is impossible from considerations of Krull dimensions.
\item 
If $H_0(U_1,U_2) \neq 0$,
then 
$0=\alpha(H)=\alpha(H_0(U_1,U_2))=H_0(p,q)$
is a non-trivial algebraic dependence
of $p$ and $q$ over $\mathbb{C}$.
But $p$ and $q$ are algebraically independent over 
$\mathbb{C}$,
because $\Jac(p,q) \neq 0$;
see ~\cite[pages 19-20]{makar} or 
~\cite[Proposition 6A.4]{rowen}.
\end{itemize}

Since we do not know if $y$ is in the image of $\alpha$,
we define the following (surjective)
$\beta: \mathbb{C}[U_1,U_2,U_3][1/V] \to \mathbb{C}[p,q,x][1/v]$
by
$\beta(U_1):=p$,
$\beta(U_2):=q$,
$\beta(U_3):=x$,
$\beta(1/V):= 1/(\beta(V))$,
where
$V:=v(U_1,U_2)$, namely, 
in $v \in \mathbb{C}[p,q]-0$ replace
$p$ by $U_1$ and $q$ by $U_2$
and get $V$.
It is clear that
$\beta(V)= v$,
so                           
$\beta(1/V)= 1/v$.

Notice that $V \in \mathbb{C}[U_1,U_2]$;
the fact that the $U_3$-degree of $V$
is zero will be crucial in what follows.

Now, $y$ is in the image of $\beta$;
indeed, let $U:=u(U_1,U_2,U_3)$, namely, 
in $u \in \mathbb{C}[p,q,x]-0$ replace
$p$ by $U_1$, $q$ by $U_2$ and $x$ by $U_3$,
and get $U$.
Then clearly $\beta(U/V)=u/v=y$.

Take:
$T_1:=U_3$ and $T_2:=U/V$.
Then, $\beta(T_1)=\beta(U_3)=x$,
and 
$\beta(T_2)=\beta(U/V)=u/v=y$.

Each of the following three elements lie in the kernel of $\beta$:
$U_1-p(T_1,T_2)$,
$U_2-q(T_1,T_2)$
and
$U_3-x(T_1,T_2)=U_3-T_1=0$.
Indeed,
$\beta(U_1-p(T_1,T_2))=\beta(U_1)-\beta(p(T_1,T_2))=p-p=0$
and
$\beta(U_2-q(T_1,T_2))=\beta(U_2)-\beta(q(T_1,T_2))=q-q=0$.

Claim: The kernel of $\beta$ is a principal prime ideal of 
$\mathbb{C}[U_1,U_2,U_3][1/V]$, generated by exactly the same
$H \in \mathbb{C}[U_1,U_2,U_3]$ that generates the kernel of 
$\alpha$.

Proof of claim: Assume that $R/V^j$ is in the kernel of $\beta$,
where $R \in \mathbb{C}[U_1,U_2,U_3]$.
We have
$0=\beta(R/V^j)=\beta(R)/\beta(V)^j=\beta(R)/v^j$,
hence $0=\beta(R)$.
Since $\beta$ restricted to $\mathbb{C}[U_1,U_2,U_3]$
is $\alpha$, we get that $R$ belongs to the kernel of 
$\alpha$, hence $R = \tilde{R} H$,
for some $\tilde{R} \in \mathbb{C}[U_1,U_2,U_3]$.
So, 
$R/V^j= \tilde{R} H/ V^j= (\tilde{R}/ V^j) H$,
as claimed.

Therefore, there exist $R_1,R_2 \in \mathbb{C}[U_1,U_2,U_3]$
($R_3=0$) and $n,m \geq 0$  
such that
$U_1-p(T_1,T_2) = (R_1/V^n) H$
and
$U_2-q(T_1,T_2)= (R_2/V^m) H$.
So,
$U_1= p(T_1,T_2) + (R_1/V^n) H$
and
$U_2= q(T_1,T_2) + (R_2/V^m) H$
(and
$U_3= T_1$).

Differentiating these three equations with respect to $U_1,U_2,U_3$
and using the Chain Rule, we get similar matrices to those in Formanek's proof;
the difference is that instead of $R_1, R_2, R_3$ of Formanek's proof,
we have here $R_1/V^n, R_2/V^m, 0$.

Applying $\beta$ gives a matrix equation over 
$\mathbb{C}[p,q,x][1/v]$, similar to the matrix equation 
$(2)$ of Formanek's proof.

Cramer's Rule shows that 
$\beta(\partial H/\partial U_3) = \lambda/ d$,
where $\lambda = \Jac(p,q) \in \mathbb{C}^*$ 
and 
$d \in \mathbb{C}[p,q,x][1/v]-0$ 
is the determinant of the matrix on the left.

$d$ belongs to the group of invertible elements
of $\mathbb{C}[p,q,x][1/v]$,
hence, by our assumption, 
$d$ belongs to $\mathbb{C}(p,q)-0$.

On the one hand, 
$d \in \mathbb{C}[p,q,x][1/v]-0$, 
hence $d= \tilde{d}/v^l$
for some 
$\tilde{d} \in \mathbb{C}[p,q,x]-0$
and $l \geq 0$.
On the other hand,
$d \in \mathbb{C}(p,q)-0$,
hence $d=a/b$
for some $a,b \in \mathbb{C}[p,q]-0$.
Combining the two we get,
$\tilde{d}/v^l = a/b$,
so
$\mathbb{C}[p,q,x]-0 \ni \tilde{d}= 
v^l(a/b) \in \mathbb{C}(p,q)-0$.
{}From Corollary \ref{cor intersection}
we get that
$\tilde{d} \in \mathbb{C}[p,q]-0$.

(Remark: Actually, one can use Wang's intersection theorem directly,
without Corollary \ref{cor intersection},
and still get 
$\tilde{d} \in \mathbb{C}[p,q]-0$,
as long as one observes that
$\mathbb{C}[p,q,x][1/v]= \mathbb{C}[x,y][1/v]$.
Indeed,
$d \in \mathbb{C}[p,q,x][1/v] = \mathbb{C}[x,y][1/v]$, 
hence $d= \tilde{d}/v^l$
for some $\tilde{d} \in \mathbb{C}[x,y]-0$
and $l \geq 0$, etc.).

So $d= \tilde{d}/v^l$,
with $\tilde{d} \in \mathbb{C}[p,q]-0$.
Let 
$D=d(U_1,U_2)=\tilde{d}(U_1,U_2)/v^l(U_1,U_2)=
\tilde{d}(U_1,U_2)/V^l$.
Clearly,
$\beta(D) = d$.

For convenience, multiply the above equation
$\beta(\partial H/\partial U_3) = \lambda/ d$
by $d$ and get
$d \beta(\partial H/\partial U_3) = \lambda$.
Then
$\beta(D) \beta(\partial H/\partial U_3) = \lambda$,
so
$\beta(D \partial H/\partial U_3) = \beta(\lambda)$.
Therefore,
$D \partial H/\partial U_3 - \lambda$
is in the kernel of $\beta$.

We have seen that the kernel of $\beta$ is a principal ideal of 
$\mathbb{C}[U_1,U_2,U_3][1/V]$, generated by 
$H \in \mathbb{C}[U_1,U_2,U_3]$,
hence there exist 
$S \in \mathbb{C}[U_1,U_2,U_3]$ 
and $t \geq 0$
such that
$D \partial H/\partial U_3 - \lambda = (S/V^t)H$.
Replace $D$ by $\tilde{d}(U_1,U_2)/V^l$
and get,
$(\tilde{d}(U_1,U_2)/V^l) \partial H/\partial U_3 - \lambda = (SH)/V^t$.
Multiply both sides by 
$V^{l+t}$ and get,
$V^t \tilde{d}(U_1,U_2) \partial H/\partial U_3 - \lambda V^{l+t} = V^l(SH)$.

Now, as promised above, we use the fact that the 
$U_3$-degree of $V$ is zero:
The $U_3$-degree of the right side is at least $r$
(= that of $H$, which is exactly $r$, plus that of $S$, which is $\geq 0$),
while the $U_3$-degree of the left side is exactly $r-1$
(= that of $\partial H/\partial U_3$).
It follows that $S=0$ and $r-1=0$,
so $r=1$
and 
$H= H_1(U_1,U_2) U_3 + H_0(U_1,U_2)$.
Apply $\beta$ and get
$0 = H_1(p,q) x + H_0(p,q)$,
so
$x = -H_0(p,q)/H_1(p,q) \in \mathbb{C}(p,q)$.
By Wang's intersection theorem, 
$x \in \mathbb{C}[p,q]$.
Then obviously,
$\mathbb{C}[p,q][y]= \mathbb{C}[x,y]$.
Finally, Formanek's automorphism theorem
implies that 
$\mathbb{C}[p,q]= \mathbb{C}[x,y]$,
namely $f$ is an automorphism.
\end{proof}


All the arguments and known results we use do not depend on 
Keller's theorem, hence we have a new proof of Keller's theorem in dimension two:
\begin{theorem}[Keller's theorem]
Let
$f:\mathbb{C}[x,y] \to \mathbb{C}[x,y]$ 
be a $\mathbb{C}$-algebra endomorphism  
that satisfies
$\Jac(p,q) \in \mathbb{C}^*$.
If 
$\mathbb{C}(p,q)=\mathbb{C}(x,y)$,
then $f$ is an automorphism.
\end{theorem}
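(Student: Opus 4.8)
The plan is to deduce this statement directly from the Main Theorem (Theorem \ref{my keller}), by checking that Keller's hypothesis $\mathbb{C}(p,q)=\mathbb{C}(x,y)$ already forces the unit condition appearing there. First I would record that the standing assumption $\Jac(p,q)\in\mathbb{C}^*$ is in force, so Corollary \ref{cor fractions} applies and supplies $u\in\mathbb{C}[p,q,x]-0$ and $v\in\mathbb{C}[p,q]-0$ with $y=u/v$. This is precisely the data needed to form the ring $\mathbb{C}[p,q,x][1/v]$ that occurs in the hypothesis of Theorem \ref{my keller}, so the setup is legitimate.

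The key observation is that $\mathbb{C}[p,q,x][1/v]$ is a subring of the field $\mathbb{C}(x,y)$. Consequently, any invertible element of $\mathbb{C}[p,q,x][1/v]$ is in particular a nonzero element of $\mathbb{C}(x,y)$; that is, the group of units is automatically contained in $\mathbb{C}(x,y)-0$. This inclusion holds for any endomorphism $f$ with invertible Jacobian, with no extra hypothesis, since a unit in a domain is necessarily nonzero.

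Now I would bring in Keller's hypothesis $\mathbb{C}(p,q)=\mathbb{C}(x,y)$. Removing $0$ from both sides gives $\mathbb{C}(x,y)-0=\mathbb{C}(p,q)-0$, so the inclusion just established reads: the group of invertible elements of $\mathbb{C}[p,q,x][1/v]$ is contained in $\mathbb{C}(p,q)-0$. This is exactly the hypothesis of Theorem \ref{my keller}. Applying that theorem yields that $f$ is an automorphism, which completes the argument.

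I do not expect a genuine obstacle here: the whole content of the deduction is the remark that the unit condition of Theorem \ref{my keller} becomes vacuous once the fields of fractions coincide, because then $\mathbb{C}(p,q)-0$ is the full set of nonzero elements of the ambient field. The only point deserving care is to note explicitly that the inclusion of units into $\mathbb{C}(x,y)-0$ requires nothing more than that $\mathbb{C}[p,q,x][1/v]$ sits inside a field, and that the hypothesis is used solely to rewrite $\mathbb{C}(x,y)-0$ as $\mathbb{C}(p,q)-0$. Since the proof of Theorem \ref{my keller} invokes only Formanek's two theorems and Wang's intersection theorem, none of which depend on Keller's theorem, this gives a genuinely new proof of Keller's theorem in dimension two.
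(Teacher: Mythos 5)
Your proposal is correct and is essentially the paper's own argument: the units of $\mathbb{C}[p,q,x][1/v]$ lie in $\mathbb{C}(x,y)-0$, which under Keller's hypothesis equals $\mathbb{C}(p,q)-0$, so Theorem \ref{my keller} applies. No issues.
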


\begin{proof}
The group of invertible elements of
$\mathbb{C}[p,q,x][1/v] \subset \mathbb{C}(x,y)$
is contained in 
$\mathbb{C}(x,y)-0 = \mathbb{C}(p,q)-0$.
Now apply Theorem \ref{my keller}.
\end{proof}

Notice that the converse of Theorem \ref{my keller} is trivially true:
If $f$ is an automorphism, 
then 
$\mathbb{C}[p,q] = \mathbb{C}[x,y]$,
so
$\mathbb{C}(p,q) = \mathbb{C}(x,y)$,
hence the group of invertible elements of
$\mathbb{C}[p,q,x][1/v] \subset \mathbb{C}(x,y)$
is contained in 
$\mathbb{C}(x,y)-0 = \mathbb{C}(p,q)-0$.

Another argument: 
If $f$ is an automorphism, 
then we can take
$u=y$ and $v=1$.
Then
$\mathbb{C}[p,q,x][1/v]= \mathbb{C}[x,y]$,
and its group of invertible elements 
is $\mathbb{C}^*$,
which is contained in $\mathbb{C}(p,q)-0$.
 
Therefore, the condition in Keller's theorem
is equivalent to our condition, not just implies our condition:
\begin{proposition}
TFAE:
\begin{itemize}
\item [(i)] $f$ is an automorphism, i.e. $\mathbb{C}[p,q]=\mathbb{C}[x,y]$.
\item [(ii)] $f$ is birational, i.e. $\mathbb{C}(p,q)=\mathbb{C}(x,y)$.
\item [(iii)] The group of invertible elements of
$\mathbb{C}[p,q,x][1/v]$
is contained in 
$\mathbb{C}(p,q)-0$.
\end{itemize}
\end{proposition}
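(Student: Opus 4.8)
The plan is to prove all three equivalences at once by closing the cycle of implications $(\mathrm{iii}) \Rightarrow (\mathrm{i}) \Rightarrow (\mathrm{ii}) \Rightarrow (\mathrm{iii})$. The point is that each single arrow is already at our disposal, so no genuinely new argument is required beyond what has been established above; the proposition is essentially a repackaging.

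First I would note that $(\mathrm{iii}) \Rightarrow (\mathrm{i})$ is \emph{exactly} the content of the Main Theorem (Theorem \ref{my keller}): under the hypothesis that the group of invertible elements of $\mathbb{C}[p,q,x][1/v]$ is contained in $\mathbb{C}(p,q)-0$, that theorem concludes $\mathbb{C}[p,q]=\mathbb{C}[x,y]$. This is the only substantial step, and all of its difficulty — the construction of $\beta$, the principality of its kernel, Cramer's Rule, and the $U_3$-degree argument forcing $r=1$ — has already been carried out. Next, $(\mathrm{i}) \Rightarrow (\mathrm{ii})$ is immediate: if $\mathbb{C}[p,q]=\mathbb{C}[x,y]$, then passing to fields of fractions yields $\mathbb{C}(p,q)=\mathbb{C}(x,y)$. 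Finally, for $(\mathrm{ii}) \Rightarrow (\mathrm{iii})$ I would argue exactly as in the proof of Keller's theorem given above: assuming $\mathbb{C}(p,q)=\mathbb{C}(x,y)$, the inclusion $\mathbb{C}[p,q,x][1/v] \subset \mathbb{C}(x,y)$ shows that its group of invertible elements sits inside $\mathbb{C}(x,y)-0 = \mathbb{C}(p,q)-0$, which is precisely condition $(\mathrm{iii})$.

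I do not anticipate any real obstacle, since the only nontrivial ingredient is $(\mathrm{iii}) \Rightarrow (\mathrm{i})$, which is Theorem \ref{my keller} itself. The one point I would double-check is that condition $(\mathrm{iii})$ does not depend on the particular $u,v$ produced by Corollary \ref{cor fractions}: when $f$ is an automorphism one may take $u=y$ and $v=1$, as in the ``Another argument'' paragraph, so that $\mathbb{C}[p,q,x][1/v]=\mathbb{C}[x,y]$ has unit group $\mathbb{C}^*\subset \mathbb{C}(p,q)-0$. This confirms that the three statements genuinely refer to the same data and that the cycle is legitimate.
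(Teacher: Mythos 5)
Your cycle $(\mathrm{iii})\Rightarrow(\mathrm{i})\Rightarrow(\mathrm{ii})\Rightarrow(\mathrm{iii})$ is exactly how the paper establishes the proposition: $(\mathrm{iii})\Rightarrow(\mathrm{i})$ is Theorem \ref{my keller}, $(\mathrm{i})\Rightarrow(\mathrm{ii})$ is passage to fraction fields, and $(\mathrm{ii})\Rightarrow(\mathrm{iii})$ is the one-line inclusion argument from the proof of Keller's theorem. The proposal is correct and essentially identical to the paper's reasoning, including the remark that one cannot (yet) prove $(\mathrm{iii})\Rightarrow(\mathrm{ii})$ directly.
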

We do not know how to show directly that $(iii)$ implies $(ii)$.

\section{Further discussion}

We wish to bring some related ideas.

\textbf{First idea:} 
We have already mentioned in the Preliminaries 
that Formanek remarks that 
$\mathbb{C}(p,q,w)=\mathbb{C}(x,y)$,
where $w$ is the image of $x$ under any automorphism of 
$\mathbb{C}[x,y]$.
Therefore, we can obtain similar theorems
to Theorem \ref{my keller} with $x$
replaced by any image of $x$ under an 
automorphism of $\mathbb{C}[x,y]$.

More elaborately, take any automorphism 
$g: \mathbb{C}[x,y] \to \mathbb{C}[x,y]$
and denote 
$g_1:=g(x)$ and $g_2:=g(y)$.

We have
$\mathbb{C}(p,q,g_1)=\mathbb{C}(x,y)=
\mathbb{C}(g_1,g_2)$;
the first equality follows from Formanek's remark,
while the second equality trivially follows from 
$\mathbb{C}[x,y]=\mathbb{C}[g_1,g_2]$.
Then,
$g_2 \in \mathbb{C}(p,q)(g_1) = \mathbb{C}(p,q)[g_1]$,
because $g_1$ is algebraic over $\mathbb{C}(p,q)$.
It is easy to obtain
$g_2 = u_g/v_g$,
where
$u_g \in \mathbb{C}[p,q,g_1]-0$
and
$v_g \in \mathbb{C}[p,q]-0$.

\begin{theorem}
If the group of invertible elements of
$\mathbb{C}[p,q,g_1][1/v_g]$
is contained in 
$\mathbb{C}(p,q)-0$,
then $f$ is an automorphism.
\end{theorem}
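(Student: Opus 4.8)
The plan is to run the proof of Theorem \ref{my keller} essentially verbatim, replacing the triple $(p,q,x)$ by $(p,q,g_1)$ and the element $y$ by $g_2$, and to invoke that $g$ is an automorphism at exactly the three points where the original argument used special features of $x$ and $y$. First I would record the hypotheses already assembled before the statement: by Formanek's remark $\mathbb{C}(p,q,g_1)=\mathbb{C}(x,y)$, and $g_2=u_g/v_g$ with $u_g\in\mathbb{C}[p,q,g_1]-0$ and $v_g\in\mathbb{C}[p,q]-0$. Since $v_g\in\mathbb{C}[p,q]$, the substitution $V:=v_g(U_1,U_2)$ again has $U_3$-degree zero, which is what the final degree comparison needs.

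Then I would define $\alpha\colon\mathbb{C}[U_1,U_2,U_3]\to\mathbb{C}[p,q,g_1]$ by $U_1\mapsto p$, $U_2\mapsto q$, $U_3\mapsto g_1$. As $p,q$ are algebraically independent and $g_1$ is algebraic over $\mathbb{C}(p,q)$ (because $\mathbb{C}(p,q,g_1)=\mathbb{C}(x,y)$ has transcendence degree $2$), the same Krull-dimension count shows $\ker\alpha$ is a principal prime ideal, with generator $H=H_rU_3^r+\cdots+H_0$, $H_j\in\mathbb{C}[U_1,U_2]$, $r\geq 1$. Extending to $\beta\colon\mathbb{C}[U_1,U_2,U_3][1/V]\to\mathbb{C}[p,q,g_1][1/v_g]$, the elements $g_1,g_2$ lie in the image via $\beta(U_3)=g_1$ and $\beta(U/V)=g_2$ with $U=u_g(U_1,U_2,U_3)$. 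Here is the first place automorphy enters: since $\mathbb{C}[g_1,g_2]=\mathbb{C}[x,y]$, both $p$ and $q$ are \emph{polynomials} in $g_1,g_2$, say $p=\tilde p(g_1,g_2)$, $q=\tilde q(g_1,g_2)$, so that with $T_1:=U_3$, $T_2:=U/V$ the elements $U_1-\tilde p(T_1,T_2)$, $U_2-\tilde q(T_1,T_2)$ (and $U_3-T_1=0$) lie in $\ker\beta$, which is again principal on the same $H$.

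The differentiation and Cramer's-Rule step is formally identical, giving $\beta(\partial H/\partial U_3)=\lambda/d$ with $d\in\mathbb{C}[p,q,g_1][1/v_g]$ a unit. The one computation I must check carefully — and the place I expect to be the main obstacle — is that the numerator $\lambda$ is still a nonzero constant: it is now the Jacobian of $(\tilde p,\tilde q)$ with respect to $(g_1,g_2)$, and by the Chain Rule $\Jac(p,q)=\lambda\cdot\Jac(g_1,g_2)$, so $\lambda=\Jac(p,q)/\Jac(g_1,g_2)\in\mathbb{C}^*$ precisely because $g$ is an automorphism (this is the second and essential use of automorphy; a general, non-automorphic $g_1$ would break exactly this). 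By hypothesis $d\in\mathbb{C}(p,q)-0$, so writing $d=\tilde d/v_g^{\,l}$ and applying Corollary \ref{cor intersection} with $\mathbb{C}[p,q]\subseteq\mathbb{C}[p,q,g_1]\subseteq\mathbb{C}[x,y]$ yields $\tilde d\in\mathbb{C}[p,q]-0$.

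Finally I would set $D=\tilde d(U_1,U_2)/V^{\,l}$, so $\beta(D)=d$, note $D\,\partial H/\partial U_3-\lambda\in\ker\beta$, and run the $U_3$-degree comparison unchanged: the left side has $U_3$-degree $r-1$ while any nonzero multiple of $H$ has $U_3$-degree $\geq r$, forcing $r=1$. Then $H=H_1(U_1,U_2)U_3+H_0(U_1,U_2)$ gives $g_1=-H_0(p,q)/H_1(p,q)\in\mathbb{C}(p,q)$, whence $g_1\in\mathbb{C}[p,q]$ by Wang's intersection theorem. The third use of automorphy closes the argument: since $\mathbb{C}[g_1,g_2]=\mathbb{C}[x,y]$ and $g_1\in\mathbb{C}[p,q]$, we get $\mathbb{C}[p,q,g_2]=\mathbb{C}[x,y]$, so Formanek's automorphism theorem (with $W=g_2$) gives $\mathbb{C}[p,q]=\mathbb{C}[x,y]$, i.e.\ $f$ is an automorphism.
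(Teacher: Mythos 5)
Your proof is correct and follows exactly the route the paper intends: the paper's own proof is just the one-line instruction to rerun the proof of Theorem \ref{my keller} with $g_1,g_2$ in place of $x,y$, "considering $p$ and $q$ as functions of $g_1$ and $g_2$." You have in fact supplied more detail than the paper does, correctly isolating the three places where automorphy of $g$ is needed --- in particular that $\Jac_{(g_1,g_2)}(\tilde p,\tilde q)\in\mathbb{C}^*$ follows from the Chain Rule together with $\Jac(g_1,g_2)\in\mathbb{C}^*$.
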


\begin{proof}
In the proof of Theorem \ref{my keller}
replace $x$ and $y$ by $g_1$ and $g_2$, 
do the appropriate adjustments,
and get a proof for the new theorem.
Notice that now, instead of considering 
$p$ and $q$ as functions of $x$ and $y$,
one has to consider 
$p$ and $q$ as functions of $g_1$ and $g_2$.
\end{proof}

\textbf{Second idea:} 
For $v$ as in Corollary \ref{cor fractions}
write
$v= v_1 \cdots v_m$,
where $v_1,\ldots,v_m \in \mathbb{C}[p,q]$ 
are irreducible elements of 
$\mathbb{C}[p,q]$.
There are two options, either 
one (or more) of the $v_j$'s
becomes reducible in $\mathbb{C}[x,y]$
or all the $v_j$'s remain irreducible
in $\mathbb{C}[x,y]$.

If one (or more) of the $v_j$'s
becomes reducible in $\mathbb{C}[x,y]$,
then it is possible to show that
our condition of Theorem \ref{my keller}
is not satisfied,
and hence $f$ is not an automorphism:
Assume that
$v_1=w_1 \cdots w_l$, where 
$w_1, \ldots, w_l \in \mathbb{C}[x,y]$
are irreducible in 
$\mathbb{C}[x,y]$,
$l > 1$.
It is not difficult to see 
(use Wang's intersection theorem)
that at least two factors are in
$\mathbb{C}[x,y] - \mathbb{C}[p,q]$,
w.l.o.g $w_1$ and $w_2$.
We claim that $w_1$ is invertible in 
$\mathbb{C}[p,q,x][1/v] = \mathbb{C}[x,y][1/v]$.
Indeed,
$1=v/v= v_1 \cdots v_m / v =
w_1 w_2 \cdots w_l v_2 \cdots v_m / v =
w_1 (w_2 \cdots w_l v_2 \cdots v_m / v)$.

Clearly, 
$w_1 \notin \mathbb{C}(p,q)$,
because otherwise,
$w_1 \in \mathbb{C}(p,q) \cap \mathbb{C}[x,y] = \mathbb{C}[p,q]$,
but 
$w_1 \in \mathbb{C}[x,y] - \mathbb{C}[p,q]$.

Actually, if one (or more) of the $v_j$'s
becomes reducible in $\mathbb{C}[x,y]$,
then it is immediate that 
$f$ is not an automorphism,
since an automorphism satisfies
$\mathbb{C}[p,q]=\mathbb{C}[x,y]$,
so trivially every irreducible element of 
$\mathbb{C}[p,q]$ 
is an irreducible element of 
$\mathbb{C}[x,y]$.

Next, if all the $v_j$'s remain irreducible
in $\mathbb{C}[x,y]$,
then our condition of Theorem \ref{my keller}
is satisfied:

\begin{theorem}[A special case of the main theorem]\label{my keller special case}
If $v_1,\ldots,v_m$ remain irreducible in 
$\mathbb{C}[x,y]$,
then $f$ is an automorphism.
\end{theorem}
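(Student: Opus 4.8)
The plan is to deduce this statement directly from the Main Theorem (Theorem \ref{my keller}): since that theorem already does all the work once its hypothesis is in force, the only thing to establish here is that the irreducibility assumption on $v_1,\ldots,v_m$ forces the group of invertible elements of $\mathbb{C}[p,q,x][1/v]$ into $\mathbb{C}(p,q)-0$. So I would set up the proof as a verification of the hypothesis of Theorem \ref{my keller}, followed by a one-line appeal to it.

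First I would record the identity $\mathbb{C}[p,q,x][1/v]=\mathbb{C}[x,y][1/v]$, already noted in the remark inside the proof of Theorem \ref{my keller}: the relation $y=u/v$ gives $\mathbb{C}[x,y]\subseteq\mathbb{C}[p,q,x][1/v]$, while $p,q,x\in\mathbb{C}[x,y]$ gives the reverse inclusion, so the two localizations coincide. This step is the useful one, because it replaces the opaque ring $\mathbb{C}[p,q,x]$ by the UFD $\mathbb{C}[x,y]$, whose unit theory after localization is completely standard.

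The heart of the argument is then the description of units in a localization of a UFD. Since $\mathbb{C}[x,y]$ is a UFD and, \emph{by hypothesis}, $v=v_1\cdots v_m$ factors into elements $v_j$ that remain irreducible in $\mathbb{C}[x,y]$, the group of units of $\mathbb{C}[x,y][1/v]$ consists exactly of $\mathbb{C}^*$ times Laurent monomials in the $v_j$; that is, every unit has the form $c\,v_1^{n_1}\cdots v_m^{n_m}$ with $c\in\mathbb{C}^*$ and $n_1,\dots,n_m\in\mathbb{Z}$. The routine justification I would give: if $a/v^N$ is invertible with inverse $b/v^M$, then $ab=v^{N+M}$ in $\mathbb{C}[x,y]$, so unique factorization forces every irreducible factor of $a$ to be an associate of some $v_j$; hence $a$, and therefore $a/v^N$, is a unit of $\mathbb{C}[x,y]$ times an integer-exponent monomial in the $v_j$. (Here it is harmless that the $v_j$ may repeat or be associates; such redundancy is simply absorbed into the $\mathbb{C}^*$ factor.)

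Finally, since each $v_j\in\mathbb{C}[p,q]\subseteq\mathbb{C}(p,q)$ and $\mathbb{C}(p,q)$ is a field, every unit $c\,v_1^{n_1}\cdots v_m^{n_m}$ lies in $\mathbb{C}(p,q)-0$. Thus the group of invertible elements of $\mathbb{C}[p,q,x][1/v]$ is contained in $\mathbb{C}(p,q)-0$, the hypothesis of Theorem \ref{my keller} is met, and $f$ is an automorphism. The proof is short, and its only genuinely load-bearing point is the unit computation of the third paragraph; this is exactly where the hypothesis enters, since it is precisely the persistence of the $v_j$ as irreducibles in $\mathbb{C}[x,y]$ that prevents $v$ from acquiring new irreducible factors outside $\mathbb{C}[p,q]$ — factors which, as the preceding discussion of the ``second idea'' shows, would otherwise supply units lying outside $\mathbb{C}(p,q)$.
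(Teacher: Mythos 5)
Your proposal is correct and follows essentially the same route as the paper: both identify $\mathbb{C}[p,q,x][1/v]$ with $\mathbb{C}[x,y][1/v]$, use unique factorization in $\mathbb{C}[x,y]$ (with the $v_j$ prime there by hypothesis) to show every unit is a scalar times a Laurent monomial in the $v_j$, hence lies in $\mathbb{C}(p,q)-0$, and then invoke Theorem \ref{my keller}. Your write-up is in fact slightly more careful than the paper's, which omits the unit factor $c\in\mathbb{C}^*$ when writing $r=v_1^{\alpha_1}\cdots v_m^{\alpha_m}$.
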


Of course, since $\mathbb{C}[p,q]$ ($\mathbb{C}[x,y]$) 
is a UFD,
every irreducible element of 
$\mathbb{C}[p,q]$ ($\mathbb{C}[x,y]$) is prime.

\begin{proof}
By assumption,
$v_1,\ldots,v_m \in \mathbb{C}[p,q]$ 
are irreducible elements of
$\mathbb{C}[x,y]$,
hence,
$v_1,\ldots,v_m$ 
are prime elements of
$\mathbb{C}[x,y]$.

Claim: The condition of Theorem \ref{my keller}
is satisfied.

Proof of claim:
Let 
$a \in \mathbb{C}[p,q,x][1/v]= \mathbb{C}[x,y][1/v]$
be an invertible element,
so there exists 
$b \in \mathbb{C}[p,q,x][1/v]= \mathbb{C}[x,y][1/v]$
such that
$ab = 1$.
We can write
$a= r/v^k$
and
$b= s/v^l$,
for some
$r,s \in \mathbb{C}[x,y]-0$
and
$k,l \geq 0$.
Then $ab=1$ 
becomes
$rs = v^{k+l} = (v_1 \cdots v_m)^{k+l}$.
Since 
$v_1,\ldots,v_m$ are prime elements of
$\mathbb{C}[x,y]$,
we obtain that
$r= v_1^{\alpha_1} \cdots v_m^{\alpha_m}$
and
$s= v_1^{\beta_1} \cdots v_m^{\beta_m}$,
where 
$\alpha_j+\beta_j = k+l$,
$1 \leq j \leq m$.
Therefore,
$r,s \in \mathbb{C}[p,q]-0$,
so
$a= r/v^k \in \mathbb{C}(p,q)-0$,
and we are done.
\end{proof}

Notice that in Theorem \ref{my keller special case}
we demand that each of the irreducible factors 
$v_1,\ldots,v_m \in \mathbb{C}[p,q]$
of $v$
remain irreducible in $\mathbb{C}[x,y]$,
but we do not demand that other irreducible elements of 
$\mathbb{C}[p,q]$
remain irreducible in 
$\mathbb{C}[x,y]$.

If one demands that every irreducible element of 
$\mathbb{C}[p,q]$
remains irreducible in 
$\mathbb{C}[x,y]$,
then, without relying on Theorem \ref{my keller},
one can get that $f$ is an automorphism,
thanks to the result ~\cite[Lemma 3.2]{J}
of Jedrzejewicz and Zieli\'{n}ski. 

Their result says the following:
Let $A$ be a UFD. 
Let $R$ be a subring of $A$ such that
$R^* = A^*$. 
The following conditions are equivalent:
\begin{itemize}
\item [(i)] Every irreducible element of $R$ 
remains irreducible in $A$.
\item [(ii)] $R$ is factorially closed in $A$.
\end{itemize}

(Recall that a sub-ring $R$ of a ring $A$ 
is called factorially closed in $A$
if whenever $a_1,a_2 \in A$
satisfy $a_1 a_2 \in R-0$,
then $a_1,a_2 \in R$).

In ~\cite[Lemma 3.2]{J} take
$A=\mathbb{C}[x,y], R=\mathbb{C}[p,q]$;
since we now assume that every irreducible element of 
$\mathbb{C}[p,q]$
remains irreducible in 
$\mathbb{C}[x,y]$, 
we obtain that
$\mathbb{C}[p,q]$ is factorially closed in
$\mathbb{C}[x,y]$,
and we are done by the following easy lemma:

\begin{lemma}\label{lemma}
If $\mathbb{C}[p,q]$ is factorially closed in
$\mathbb{C}[x,y]$,
then $f$ is an automorphism.
\end{lemma}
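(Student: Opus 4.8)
The plan is to prove the lemma directly, without invoking Theorem \ref{my keller}, by showing that $x \in \mathbb{C}[p,q]$. Once this is established the conclusion is immediate: if $x \in \mathbb{C}[p,q]$ then $\mathbb{C}[p,q,y]$ contains both $x$ and $y$, so $\mathbb{C}[p,q,y]=\mathbb{C}[x,y]$, and Formanek's automorphism theorem (applied with $W=y$) yields $\mathbb{C}[p,q]=\mathbb{C}[x,y]$, i.e.\ $f$ is an automorphism. Thus the entire task reduces to extracting the single fact $x \in \mathbb{C}[p,q]$ from the factorial closure hypothesis.

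To do this, I would first manufacture the right element on which to apply factorial closure. By Formanek's field of fractions theorem, $\mathbb{C}(p,q,x)=\mathbb{C}(x,y)$, so $x$ is algebraic over $\mathbb{C}(p,q)$. Let $m(X)=X^r+a_{r-1}X^{r-1}+\cdots+a_0$ be its minimal polynomial over $\mathbb{C}(p,q)$, and clear denominators by a common multiple $b \in \mathbb{C}[p,q]-0$ to obtain a relation
\[
c_r\,x^{r}+c_{r-1}\,x^{r-1}+\cdots+c_1\,x+c_0=0, \qquad c_j \in \mathbb{C}[p,q],\ c_r \neq 0 .
\]
The point to verify is that $c_0 \neq 0$: if the constant term vanished we could cancel $x$ (legitimate since $x\neq 0$ in the domain $\mathbb{C}[x,y]$) and lower the degree, contradicting the minimality of $m$. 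Hence $c_0 \in \mathbb{C}[p,q]-0$. (This relation is of course the same one furnished by the generator $H$ of $\ker\alpha$ in the proof of Theorem \ref{my keller}.)

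Now I would rewrite the relation by factoring $x$ out of every term except the constant one:
\[
-c_0 = x\bigl(c_r\,x^{r-1}+\cdots+c_2\,x+c_1\bigr).
\]
Here the left-hand side lies in $\mathbb{C}[p,q]-0$, while $x$ and the cofactor $c_r x^{r-1}+\cdots+c_1$ are elements of $\mathbb{C}[x,y]$. Since $\mathbb{C}[p,q]$ is factorially closed in $\mathbb{C}[x,y]$, a product of two elements of $\mathbb{C}[x,y]$ lying in $\mathbb{C}[p,q]-0$ forces both factors into $\mathbb{C}[p,q]$; in particular $x \in \mathbb{C}[p,q]$. Applying Formanek's automorphism theorem as in the first paragraph then finishes the proof.

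I expect there to be no serious obstacle here — this is genuinely an easy lemma, as advertised. The only real idea is the observation that factorial closure should be applied to the constant-term identity $-c_0=x(\cdots)$; no degree estimate or delicate bookkeeping of the kind used in Theorem \ref{my keller} is required, and the conclusion $x\in\mathbb{C}[p,q]$ drops out in one line. It is worth recording that, as claimed in the surrounding discussion, this argument is independent of Theorem \ref{my keller}: it uses only Formanek's field of fractions theorem, the definition of factorial closure, and Formanek's automorphism theorem.
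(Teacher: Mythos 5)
Your proposal is correct and follows essentially the same route as the paper: both extract the denominator-cleared minimal polynomial of $x$ over $\mathbb{C}(p,q)$ (the paper's $H$), note the constant term is nonzero by minimality, rewrite the relation as $-c_0 = x(\cdots)$, apply factorial closure to conclude $x \in \mathbb{C}[p,q]$, and finish with Formanek's automorphism theorem. The only difference is cosmetic: the paper references the generator $H$ of $\ker\alpha$ from the proof of Theorem \ref{my keller}, while you construct the same relation from scratch.
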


\begin{proof}
Let $H$ be as in the proof of Theorem \ref{my keller},
and denote 
$h_j := H_j(p,q)$, $0 \leq j \leq r$.
Obviously, $h_0 \neq 0$ by the minimality of $r$.

We have
$x(h_rx^{r-1}+h_{r-1}x^{r-2}+\ldots+h_1)=
-h_0 \in \mathbb{C}[p,q]-0$.
By assumption
$\mathbb{C}[p,q]$ is factorially closed in
$\mathbb{C}[x,y]$,
hence
$x \in \mathbb{C}[p,q]$
(and 
$h_rx^{r-1}+h_{r-1}x^{r-2}+\ldots+h_1 \in \mathbb{C}[p,q]$).

Then
$\mathbb{C}[p,q,y]=\mathbb{C}[x,y]$,
and $f$ is an automorphism
by Formanek's automorphism theorem.
\end{proof}

Notice that in the proof of Lemma \ref{lemma},
$h_rx^{r-1}+h_{r-1}x^{r-2}+\ldots+h_1 \in \mathbb{C}[p,q]$
also yields that $f$ is an automorphism,
because by the minimality of $r$,
we must have $r=1$,
so $h_1 x + h_0 =0$.
Then $x = -h_0 / h_1 \in \mathbb{C}(p,q)$,
and by Wang's intersection theorem,
$x \in \mathbb{C}[p,q]$, etc.

\textbf{Third idea:} Notations as in the second idea,
another special case is when all the $v_j$'s are primes
in 
$\mathbb{C}[p,q,x]$;
this special case is dealt with in
~\cite{vered}:
It is shown in ~\cite[Theorem 2.2]{vered}
that if all the $v_j$'s are primes
in $\mathbb{C}[p,q,x]$, then 
$\mathbb{C}[p,q,x]$ is a UFD,
and it is shown in ~\cite[Theorem 2.1]{vered}
that if  
$\mathbb{C}[p,q,x]$ is a UFD,
then $f$ is an automorphism.

It is not yet clear to us what happens in 
the more general case when
all the $v_j$'s are irreducibles
in $\mathbb{C}[p,q,x]$.
It may happen that some (or all)
of the $v_j$'s are not primes in 
$\mathbb{C}[p,q,x]$,
since we just know that 
$\mathbb{C}[p,q,x]$ is an integral domain
(if we knew it is a UFD, then $f$ is an automorphism
by ~\cite[Theorem 2.1]{vered}).
 
\textbf{Fourth idea:} 

We do not know if a similar result to Theorem \ref{my keller} 
holds in higher dimensions.
Even if the answer is positive, the proof should be somewhat different
from the proof of the two-dimensional case.
For example,
already in the three-dimensional case some problems may arise
when trying to generalize the proof of the two-dimensional case:

Let
$f: \mathbb{C}[x,y,z] \to \mathbb{C}[x,y,z]$
be a $\mathbb{C}$-algebra endomorphism 
having an invertible Jacobian.
Denote 
$p:= f(x), q:=f(y), r:=f(z)$.

It is not difficult to generalize Corollary \ref{cor fractions}:
\begin{corollary}
There exist 
$u \in
\mathbb{C}[p,q,r,x,y]-0$
and 
$v \in
\mathbb{C}[p,q,r]-0$
such that
$z = u/v$.
\end{corollary}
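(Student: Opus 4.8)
The plan is to mimic the proof of Corollary \ref{cor fractions} almost verbatim, replacing the single adjoined variable $x$ by the pair $x,y$ and the base field $\mathbb{C}(p,q)$ by $\mathbb{C}(p,q,r)$. First I would invoke Formanek's field of fractions theorem in dimension $n=3$, which gives $\mathbb{C}(p,q,r,x,y)=\mathbb{C}(x,y,z)$, so that $z \in \mathbb{C}(x,y,z)=\mathbb{C}(p,q,r)(x,y)$.

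Next I would observe that $x$ and $y$ are algebraic over $K:=\mathbb{C}(p,q,r)$. Indeed, since $\Jac(p,q,r)\in\mathbb{C}^*$, the three polynomials $p,q,r$ are algebraically independent over $\mathbb{C}$, so $K$ has transcendence degree $3$ over $\mathbb{C}$; this equals the transcendence degree of $\mathbb{C}(x,y,z)$, and since $K\subseteq\mathbb{C}(x,y,z)$, the extension $\mathbb{C}(x,y,z)/K$ is algebraic. In particular both $x$ and $y$ are algebraic over $K$.

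Then I would upgrade $K(x,y)=K[x,y]$ by iterating the single-variable fact (\cite[Remark 4.7]{rowen}) already used in Corollary \ref{cor fractions}: since $x$ is algebraic over $K$, we have $K(x)=K[x]$; since $y$ is algebraic over $K$, hence over $K(x)$, we get $K(x)(y)=K(x)[y]=K[x][y]=K[x,y]$. Therefore $z\in K[x,y]=\mathbb{C}(p,q,r)[x,y]$, so $z=\sum_{i,j}(a_{ij}/b_{ij})x^iy^j$ for some $a_{ij},b_{ij}\in\mathbb{C}[p,q,r]$ with $b_{ij}\neq 0$. Finally I would clear denominators exactly as before: set $B=\prod b_{ij}$ and let $B_{ij}$ be the product of all the $b_{kl}$ except $b_{ij}$, so that $z=(1/B)\sum B_{ij}a_{ij}x^iy^j$, and take $v:=B\in\mathbb{C}[p,q,r]-0$ together with $u:=\sum B_{ij}a_{ij}x^iy^j\in\mathbb{C}[p,q,r,x,y]-0$.

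I expect no genuine obstacle here. The only step that goes beyond the two-dimensional argument is the two-variable version of the statement that finiteness of an algebraic extension forces $K(x,y)=K[x,y]$, and this is just a routine two-step iteration of the single-variable statement already cited. The transcendence-degree computation is likewise immediate once one records the algebraic independence of $p,q,r$, which follows from the nonvanishing of the Jacobian.
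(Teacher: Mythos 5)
Your proposal is correct and follows essentially the same route as the paper: Formanek's field of fractions theorem in dimension three, the observation that $x$ and $y$ are algebraic over $\mathbb{C}(p,q,r)$ so that $\mathbb{C}(p,q,r)(x,y)=\mathbb{C}(p,q,r)[x,y]$, and then clearing denominators. You merely spell out two steps the paper leaves implicit (the transcendence-degree argument for algebraicity and the two-step iteration of the single-variable fact from \cite[Remark 4.7]{rowen}), and both are done correctly.
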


\begin{proof}
By Formanek's field of fractions theorem,
$\mathbb{C}(p,q,r,x,y)= \mathbb{C}(x,y,z)$.

Since $x$ and $y$ are algebraic over
$\mathbb{C}(p,q,r)$, 
a generalization of ~\cite[Remark 4.7]{rowen}
implies that
$\mathbb{C}(p,q,r)[x,y]= \mathbb{C}(p,q,r)(x,y)$.
Then, 
$\mathbb{C}(p,q,r)[x,y]= \mathbb{C}(x,y,z) \ni z$.
{}From this it is not difficult to obtain that
$z =u/v$,
where
$u \in
\mathbb{C}[p,q,r,x,y]-0$
and 
$v \in
\mathbb{C}[p,q,r]-0$.
\end{proof}

Define: 
$\alpha: 
\mathbb{C}[U_1,U_2,U_3,U_4,U_5] \to \mathbb{C}[p,q,r,x,y]$
by
$\alpha(U_1):=p$,
$\alpha(U_2):=q$,
$\alpha(U_3):=r$,
$\alpha(U_4):=x$,
$\alpha(U_5):=y$.
Clearly, $\alpha$ is surjective.

We can define 
$\beta: \mathbb{C}[U_1,U_2,U_3,U_4,U_5][1/V] 
\to \mathbb{C}[p,q,r,x,y][1/v]$.
It is clear that
$z \in \mathbb{C}[p,q,r,x,y][1/v]$.

The kernel of $\alpha$ is a height two prime ideal;
indeed, 
$\mathbb{C}[U_1,U_2,U_3,U_4,U_5]$ is of Krull dimension $5$
and $\mathbb{C}[p,q,r,x,y]$ is of Krull dimension $3$,
hence the kernel of $\alpha$ is of height two.

{}From Krull's principal ideal theorem
~\cite[Theorem 8.42]{pete},
the kernel of $\alpha$ is generated by at least two elements.

Assume for the moment that the kernel of $\alpha$ 
is generated by exactly two elements,
hence the kernel of $\beta$ 
is generated by the same two elements.
The matrix equation $(2)$ in Formanek's proof
will involve the product of a 
$5 \times 5$ matrix with a $5 \times 5$ matrix,
but Cramer's Rule seems not to help here.
We do not know yet if it is possible to overcome this problem.

\bibliographystyle{plain}

\begin{thebibliography}{00}

\bibitem{bcw} H. Bass, E. Connell and D. Wright, \textit{The Jacobian Conjecture: reduction of degree and formal expansion of the inverse}, Bull. Amer. Math. Soc. (New Series) 7, 287-330, 1982.

\bibitem{bass} H. Bass, \textit{Differential structure of \'{e}tale extensions of polynomial algebras},
in Commutative Algebra, M. Hochster, C. Huncke and J. D. Sally (ed.),
Springer-Verlag, New York, 1989, Proceedings of a Microprogram Held,
June 15-July 2, 1987.

\bibitem{pete} P. L. Clark, \textit{Commutative algebra}, http://math.uga.edu/~pete/integral.pdf, 
March 9, 2015.

\bibitem{essen book} A. van den Essen, \textit{Polynomial automorphisms and the Jacobian Conjecture}, 
Progress in Mathematics, 190. Birkh{\"a}user Verlag, Basel, 2000.

\bibitem{formanek automorphism thm} E. Formanek, \textit{Two notes on the Jacobian Conjecture}, Arch. Math. (Basel) 49, 
286-291, 1987.

\bibitem{formanek field of fractions thm} E. Formanek, \textit{Observations about the Jacobian Conjecture},
Houston J. Math. 20, no. 3, 369-380, 1994.


\bibitem{hamann} E. Hamann, \textit{Algebraic observations on the Jacobian Conjecture}, 
J. of Algebra 265, no. 2, 539-561, 2003.

\bibitem{J} P. Jedrzejewicz, J. Zieli\'{n}ski, \textit{Analogs of Jacobian conditions for subrings},
arXiv:1601.01508v1 [math.AC] 7 Jan 2016.

\bibitem{keller} O. H. Keller, \textit{Ganze Cremona-Transformationen}, Monatsh. Math. Phys. 47, 299-306, 1939.

\bibitem{makar} L. Makar-Limanov, \textit{Locally nilpotent derivatives, a new ring invariant and applications},
http://www.math.wayne.edu/~lml/lmlnotes.pdf, 1995.

\bibitem{crt} H. Matsumura, \textit{Commutative Ring theory}, 
Translated by M. Reid, Cambridge studies in advanced mathematics 8, 
Cambridge University Press, 1989.

\bibitem{moh} T. T. Moh, \textit{On the Jacobian Conjecture and the configurations of roots},
J. Reine Angew. Math. 340, 140-212, 1983.

\bibitem{vered} V. Moskowicz, \textit{The two-dimensional Jacobian Conjecture and unique factorization},
arXiv:1606.04531v2 [math.AC] 16 Jun 2016.

\bibitem{rowen} L. H. Rowen, \textit{Graduate Algebra: Commutative View}, 
Graduate Studies in Mathematics, Volume 73, American Mathematical Society, 2006.

\bibitem{wang} S. S.-S. Wang, \textit{A Jacobian criterion for separability}, J. of Algebra 65, 453-494, 1980.

\end{thebibliography}

\end{document}